\definecolor{1f1e33}{HTML}{1F1E33}
\definecolor{mediumblue}{HTML}{0000CD}
\theoremstyle{definition}
\newtheorem{dfn}{Definition}[section]
\newtheorem{ex}[dfn]{Example}
\newtheorem{prop}[dfn]{Proposition}
\newtheorem{lem}[dfn]{Lemma}
\newtheorem{thm}[dfn]{Theorem}
\newtheorem{cor}[dfn]{Corollary}
\newcommand{\Prop}{\mathbf{Prop}}
\newcommand{\Nom}{\mathbf{Nom}}
\newcommand{\pset}[1]{\mathcal{P}( {#1} )}
\newcommand{\Dia}{\Diamond}
\newcommand{\Kmodel}{\mathcal{M}}
\newcommand{\agent}{\mathbf{A}}
\newcommand{\langEL}{\mathcal{L}_\text{EL}}
\newcommand{\langAK}{\mathcal{L}_\text{AK}}
\newcommand{\MEL}{\mathcal{M}_\text{EL}}
\newcommand{\MAK}{\mathcal{M}_\text{AK}}
\newcommand{\modelsEL}{\models_\text{EL}}
\newcommand{\modelsAK}{\models_\text{AK}}
\newcommand{\tableauAK}{\mathbf{T}_\text{AK}}
\newcommand{\reflexivity}{\mathit{Ref}}
\newcommand{\Id}{\mathit{Id}}
\title{Agent-Knowledge Logic for Alternative Epistemic Logic}
\author{Yuki Nishimura\\
\institute{Tokyo Institute of Technology}\\\email{nishimura.y.as@m.titech.ac.jp}}
\begin{document}
\maketitle

\begin{abstract}
    Epistemic logic is known as a logic that captures the knowledge and beliefs of agents and has undergone various developments since Hintikka (1962). In this paper, we propose a new logic called agent-knowledge logic by taking the product of individual knowledge structures and the set of relationships among agents. This logic is based on the Facebook logic proposed by Seligman et al. (2011) and the Logic of Hide and Seek Game proposed by Li et al. (2021). We show two main results; one is that this logic can embed the standard epistemic logic, and the other is that there is a proof system of tableau calculus that works in finite time. We also discuss various sentences and inferences that this logic can express.
\end{abstract}

\section{Introduction}
\label{secintro}

Investigations into knowledge and beliefs form part of philosophy, which is now called epistemology. This area has been the subject of various studies from the standpoint of logic. One of these was conducted by applying modal logic, which is nowadays called epistemic logic. The operator $K_i$, which is the key element of this logic, has form $K_i \varphi$, which expresses that ``agent $i$ knows that $\varphi$.'' On this basis, it is possible to represent various concepts related to knowledge and belief in formal language. As far as I know, the pioneering work on epistemic logic was done by Hintikka in 1962 \cite{Hintikka1962}, and there is a wide range of research being done today; see Fagin et al. \cite{fagin1995} and van Benthem \cite{vanbenthem2006}.

A more recent logic for human knowledge is Facebook logic, developed by Seligman et al. in 2011 \cite{seligman2011}. This logic was invented to describe personal knowledge plus the friendships of agents in two-dimensional hybrid logic. For instance, consider this sentence: ``I am Andy's friend, and Andy knows he has pollen allergy. Then, one of my friends knows that they have pollen allergy.'' This inference can be written using the language of Facebook logic as follows:
\[
    \langle \texttt{Friend} \rangle i \land @_i [\texttt{Know}] p \rightarrow \langle \texttt{Friend} \rangle [\texttt{Know}] p
\]
where $p =$ ``they have pollen allergy.'' and $i =$ ``This is Andy.''
The at sign $@$ in the logical formula is the operator of hybrid logic, where $@_i p$ can be read as ``$p$ holds at point $i$.'' Facebook logic uses nominals, a tool of hybrid logic, to make reference to individual agents. For a thorough introduction into hybrid logics, we refer the reader to Blackburn \& ten Cate \cite{blackburn2006P}, Indrzejczak \cite{indrzejczak2007}, and Bra\"{u}ner \cite{brauner2011}. Sano \cite{sano2010} provides further details on two-dimensional hybrid logic.

In fact, Facebook logic treats propositional variables differently from epistemic logic. The truth of a propositional variable $p$ depends not only on the epistemic alternative but also on the agent under consideration. Therefore, the propositions represented by the propositional variables here are personal properties, such as, ``I have a pollen allergy.''

The new logic proposed in this paper --- we will call it \emph{agent-knowledge logic} --- is a modification of the aforementioned Facebook logic. One feature of this logic is that the fragment of it is compatible with epistemic logic. This property allows us to use agent-knowledge logic as an alternative to epistemic logic. Indeed, this paper shows how to embed epistemic logic into our new logic. Furthermore, agent-knowledge logic is able to formalize a variety of sentences that cannot be represented by traditional epistemic logic, such as ``one of my friends knows $p$.'' Some of the examples given in this paper may be only part of the possibilities our new logic opens up.

In this paper, we also introduce a proof system, by constructing a tableau calculus. The tableau calculus is not only a proof system but also a system for discovering a counterexample model in which the formula is not valid. In particular, by constructing a tableau calculus with the termination property --- in short, that the proof ends in finite time --- we can show that the logic is decidable.

This logic has two parents: one is Facebook logic, and the other is, which seems to have nothing to do with epistemic logic, the Logic of Hide and Seek Game (LHS, in short) created by Li et al. in 2021 \cite{li2021, li2023}. This logic was originally invented to illustrate the hide and seek game (also known as cops and robbers). In LHS, propositional variables are split into two sets, which are related to hider and seeker, respectively. We borrow this idea to express the \emph{agent-free} propositions (``the sun rises in the east,'' for example.)

We proceed as follows: Section \ref{secpre} reviews the well-known epistemic logic and explains the parents of agent-knowledge logic, Facebook logic, and LHS, briefly. In Section \ref{secAK}, we introduce our new logic, that is, agent-knowledge logic. Section \ref{secembed} shows how we embed epistemic logic into our new logic. In Section \ref{sectableau}, we construct a tableau calculus with the termination property and completeness. Finally, in Section \ref{secfuture}, we write about some future prospects.

\section{Preliminary}
\label{secpre}
\subsection{Epistemic Logic}
\label{subsecEL}
This section is mostly based on the work of Fagin et al. \cite[Chapter 2]{fagin1995}.

In epistemic logic, we have another set $\agent$ of agents besides a usual set $\Prop$ of propositional variables. The elements of $\agent$ occur in a new operator $K_i$. The intuitive meaning of $K_i \varphi$ is that ``agent $i$ knows $\varphi$.''

\begin{dfn}
    We have two disjoint sets, $\Prop$ and $\agent$. A formula $\varphi$ of the \emph{epistemic logic} $\langEL$ is defined as follows:
    \[
        \varphi \Coloneqq p \mid \neg \varphi \mid \varphi \land \varphi \mid K_i \varphi
    \]
    where $p \in \Prop$ and $i \in \agent$.
    \label{deflangEL}
\end{dfn}

We only use $\neg$ and $\land$ as primitives since other Boolean operators, such as $\lor$ and $\rightarrow$, can be defined as compounds of the first two operators.

\begin{dfn}
    A \emph{Kripke model for epistemic logic} (we call it \emph{EL model}) $\MEL$ is a tuple $(W, (R_i)_{i \in \agent}, V)$ where
    \begin{itemize}
        \item $W$ is a non-empty set,
        \item For each $i \in \agent$, $R_i$ is a binary relation on $W$,
        \item $V: \Prop \rightarrow \pset{W}$.
    \end{itemize}
\end{dfn}

\begin{dfn}
    Given an EL model $\MEL$, its point $w$, and a formula $\varphi \in \langEL$, the \emph{satisfaction relation} $\MEL, w \models \varphi$ is defined inductively as follows:
    \begin{align*}
        \MEL, w \models p \ &\iff \ w \in V(p) \text{ where } p \in \Prop, \\
        \MEL, w \models \neg \varphi \ &\iff \ \text{Not } \MEL, w \models \varphi \ (\MEL, w \not \models \varphi), \\
        \MEL, w \models \varphi \land \psi \ &\iff \ \MEL, w \models \varphi \text{ and } \MEL, w \models \psi, \\
        \MEL, w \models K_i \varphi \ &\iff \ \text{For all } v \in W, w R_i v \text{ implies } \MEL, v \models \varphi.
    \end{align*}
\end{dfn}

As for epistemic logic, we define the validity of a formula. Later we discuss embedding epistemic logic into our new logic, so the formal definition is needed.

\begin{dfn}
    A formula $\varphi$ is \emph{valid} with respect to the class of EL models (written as $\modelsEL \varphi$) if $\MEL, w \models \varphi$ for every model $\MEL$ and its every world $w$.
\end{dfn}

\subsection{Facebook Logic}
\label{subsecFL}
Facebook logic, firstly invented by Seligman et al. \cite{seligman2011}, has two characteristics compared to classical modal logic.

First, we have two modal operators, $K$ and $F$. These modal operators correspond to knowledge and friendship, respectively. Correspondingly, a possible world is decomposed into two components: one representing an agent and the other representing an epistemic alternative of an individual.

Another addition is the introduction of special propositional variables called \emph{nominals}. A nominal $n$ is a proposition corresponding to only one agent, which is a proposition for the \emph{name} of the agent. In addition, we introduce the satisfaction operator $@$ used in hybrid logic. The intuitive meaning of $@_n p$ is that ``$p$ holds for agent $n$.''


Let us introduce a formal definition. We have two disjoint infinite sets, $\Prop$ of propositional variables and $\Nom$ of nominals. A formula $\varphi$ of the \emph{Facebook logic} is defined as follows:
\[
    \varphi \Coloneqq p \mid n \mid \neg \varphi \mid \varphi \land \varphi \mid K \varphi \mid F \varphi \mid @_n \varphi
\]
where $p \in \Prop$ and $n \in \Nom$. If needed, we can define the dual $\langle K \rangle$ and $\langle F \rangle$ of each modal operators as $\langle K \rangle \varphi \coloneqq \neg K \neg \varphi$ and $\langle F \rangle \varphi \coloneqq \neg F \neg \varphi$.

The semantics of Facebook logic is based on \emph{epistemic social network models}. An epistemic social network model is a tuple $(W, A, (\sim_a)_{a \in A}, (\asymp_w)_{w \in W}, V)$, where
\begin{itemize}
    \item $W$ is a set of epistemic alternatives,
    \item $A$ is a set of agents,
    \item For each $a \in A$, $\sim_a$ is an equivalence relation on $W$,
    \item For each $w \in W$, $\asymp_w$ is an irreflexive and symmetric relation of friendship on $A$, and
    \item $V$ is a valuation function, which assigns a propositional variable $p$ to a subset of $W \times A$ and a nominal $n$ to a set $W \times \{ a \}$ for some $a \in A$.
\end{itemize}
The reason for a relation $\asymp_w$ over $A$ being irreflexive and symmetric can be understood when we assume it as a friendship; no one is a friend to oneself, and if a person is your friend, then you are a friend of them.

Then, the truth of formulas in Facebook logic is defined inductively. The Boolean cases are omitted since they are the same as those in classical modal logic. Also, the element $a \in A$ such that $V(n) = W \times \{ a \}$ holds is abbreviated as $n^V$.
\begin{align*}
    \Kmodel, w, a \models p \ &\iff \ (w, a) \in V(p) \text{ where } p \in \Prop, \\
    \Kmodel, w, a \models n \ &\iff \ n^V = a, \text{ where } n \in \Nom\\
    \Kmodel, w, a \models K \varphi \ &\iff \ \Kmodel, v, a \models \varphi \text{ for every } v \sim_a w, \\
    \Kmodel, w, a \models F \varphi \ &\iff \ \Kmodel, w, b \models \varphi \text{ for every } b \asymp_w a, \\
    \Kmodel, w, a \models @_n \varphi \ &\iff \ \Kmodel, w, n^V \models \varphi. \\
\end{align*}
As mentioned in the Introduction, the truth of a propositional variable depends on both an epistemic alternative and an agent.

\begin{ex}
    The following formulas of Facebook logic can be translated into natural language as follows.
    \begin{itemize}
        \item $K p$: I know that I am $p$.
        \item $K F p$: I know that all of my friends are $p$.
        \item $F K p$: Each of my friends knows that they are $p$.
        \item $\langle F \rangle n$: I have a friend $n$.
        \item $@_n K p$: An agent $n$ knows that they are $p$.
    \end{itemize}
\end{ex}

For the readers who would like to study it deeper, Seligman et al. \cite{seligman2011} and its sequel, Seligman et al. \cite{seligman2013}, should be of help.

\subsection{Logic of Hide and Seek Game}
\label{subsecLHS}

The logic of hide and seek game (LHS), as the name implies, is a logic for describing a hide and seek game. There are two players, a hider and a seeker, and a set of propositional variables $\Prop_H$ and $\Prop_S$ for each player to describe their state. Moreover, there is a special propositional variable $I$. This is a proposition to describe that the hider and seeker are in the same place, i.e., expressing ``I find you!''

The main difference from Facebook logic is that we use the same structure $(W, R, V)$ as in usual modal logic, which is appropriate considering that the hide and seek game is played by two players on the same board.

Here is a definition of a formula of LHS $\varphi$, where $p_H \in \Prop_H$ and $p_S \in \Prop_S$.
\[
    \varphi \Coloneqq p_H \mid p_S \mid I \mid \neg \varphi \mid \varphi \land \varphi \mid \Dia_H \varphi \mid \Dia_S \varphi
\]

The truth value of LHS formulas is defined inductively as follows. Note that both $x$ and $y$ are elements of $W$.
\begin{align*}
    \Kmodel, x, y \models p_H \ &\iff \ x \in V(p_H) \text{ where } p_H \in \Prop_H, \\
    \Kmodel, x, y \models p_S \ &\iff \ y \in V(p_S) \text{ where } p_S \in \Prop_S, \\
    \Kmodel, x, y \models I \ &\iff \ x = y, \\
    \Kmodel, x, y \models \Dia_H \varphi \ &\iff \ \text{there is some } x' \text{ such that } x R x' \text{ and } \Kmodel, x', y \models \varphi, \\
    \Kmodel, x, y \models \Dia_S \varphi \ &\iff \ \text{there is some } y' \text{ such that } y R y' \text{ and } \Kmodel, x, y' \models \varphi. \\
\end{align*}

Using this language, we can describe the hide and seek game. For example, $\Box_H \Dia_S I$ means that no matter how the hider moves, the seeker has a one-step move to catch the hider. This expression shows the existence of a winning strategy for the seeker.

In addition to the already mentioned Li et al. \cite{li2021}, Li et al. \cite{li2023} may also help readers who want to know more about LHS.

\section{Agent-Knowledge Logic}
\label{secAK}
Here, we introduce a new logic, called \emph{agent-knowledge logic}. As you read in Section \ref{secintro}, this logic is a mixture of Facebook logic and LHS. We have two dimensions, which correspond to agents and their knowledge, respectively. This structure and the intention behind it are very similar to that of Facebook logic. On the other hand, the idea that we use both $\Prop_A$ and $\Prop_K$ is unique for LHS.

\subsection{Agent-Knowledge Model}

\begin{dfn}
    We have four disjoint sets $\Prop_A$, $\Prop_K$, $\Nom_A$, and $\Nom_K$. A formula $\varphi$ of the \emph{agent-knowledge logic} $\langAK$ is defined as follows:
    \[
        \varphi \Coloneqq p_A \mid p_K \mid a \mid k \mid \neg \varphi \mid \varphi \land \varphi \mid \Box_A \varphi \mid \Box_K \varphi \mid @_a \varphi \mid @_k \varphi
    \]
    where $p_A \in \Prop_A$, $p_K \in \Prop_K$, $a \in \Nom_A$, and $k \in \Nom_K$.
\end{dfn}

We call an element of both $\Nom_A$ or $\Nom_K$ a \emph{nominal}. As we mentioned in Section \ref{subsecFL}, they point to a specific agent and a specific epistemic alternative, respectively. As well as $\lor$ and $\rightarrow$, if we need, we can define $\Dia_A$ and $\Dia_K$ in the usual way.

\begin{dfn}
    A \emph{agent-knowledge model (AK model)} $\MAK$ is a tuple \\ $(W_A, W_K, (R_y)_{y \in W_K}, (S_x)_{x \in W_A}, V_A, V_K)$ where
    \begin{itemize}
        \item $W_A, W_K$ are non-empty sets,
        \item For each $y \in W_K$, $R_y$ is a binary relation on $W_A$,
        \item For each $x \in W_A$, $S_x$ is a binary relation on $W_K$,
        \item $V_A: \Prop_A \cup \Nom_A \rightarrow \pset{W_A}$ where if $a \in \Nom_A$, then $V_A(a) = \{ x \} \text{ for some } x \in W_A$,
        \item $V_K: \Prop_K \cup \Nom_K \rightarrow \pset{W_K}$ where if $k \in \Nom_K$, then $V_K(k) = \{ y \} \text{ for some } y \in W_K$.
    \end{itemize}
\end{dfn}

Note that the image of a nominal $\Nom_A$ by $V_A$ is a singleton (the same fact holds for $\Nom_K$ and $V_K$.) Owing to this definition, nominal behaves as a \emph{name} for each possible world.

We can illustrate an agent-knowledge model as if we write Cartesian coordinates in Figure \ref{figAKmodel}. In this circumstance, a nominal is represented as a horizontal or vertical line. Likely, a propositional variable is depicted as a set of parallel lines.

We write $V$ to express $V_A \cup V_K$. For instance, $V(p_A) = V_A(p_A)$. Moreover, we abbreviate $x \in W_A$ such that $V_A(a) = \{ x \}$ by $a^V$. We do the same for $k^V$.

\begin{figure}[tb]
  \centering
  \begin{tikzpicture}
    \draw[->,>=stealth](-0.5,0)--(2.5,0)node[below]{$W_A$};
    \draw[->,>=stealth](0,-0.5)--(0,2.5)node[left]{$W_K$};
    \draw[very thick] (1.2,0)--(1.2,2.5);
    \draw (1.20,1)node[right]{$\{ a^V \} \times W_K$};
    \draw (1.2,-0.05)node[below]{$a^V$};
    \draw[->,>=stealth](4.5,0)--(7.5,0)node[below]{$W_A$};
    \draw[->,>=stealth](5,-0.5)--(5,2.5)node[left]{$W_K$};
    \draw[very thick] (5,0.7)--(7.5,0.7);
    \draw[very thick] (5,0.9)--(7.5,0.9);
    \draw[very thick] (5,1.3)--(7.5,1.3);
    \draw (6.2,1.30)node[above]{$W_A \times V(p_K)$};
  \end{tikzpicture}
  \caption{An agent-knowledge model.}
  \label{figAKmodel}
\end{figure}

\begin{dfn}
    Given a model $\MAK$, its points $(x, y) \in W_A \times W_K$, and a formula $\varphi \in \langAK$, the \emph{satisfaction relation} $\MAK, (x, y) \models \varphi$ is defined inductively as follows:
    \begin{align*}
        \MAK, (x, y) \models p_A \ &\iff \ x \in V(p_A) \text{ where } p_A \in \Prop_A, \\
        \MAK, (x, y) \models p_K \ &\iff \ y \in V(p_K) \text{ where } p_K \in \Prop_K, \\
        \MAK, (x, y) \models a \ &\iff \ x = a^V \text{ where } a \in \Nom_A, \\
        \MAK, (x, y) \models k \ &\iff \ y = k^V \text{ where } k \in \Nom_K, \\
        \MAK, (x, y) \models \neg \varphi \ &\iff \ \text{Not } \MAK, (x, y) \models \varphi \ (\MAK, (x, y) \not \models \varphi), \\
        \MAK, (x, y) \models \varphi \land \psi \ &\iff \MAK, (x, y) \models \varphi \text{ and } \MAK, (x, y) \models \psi, \\
        \MAK, (x, y) \models \Box_A \varphi \ &\iff \ \text{For all } x' \in W_A, x R_y x' \text{ implies } \MAK, (x', y) \models \varphi, \\
        \MAK, (x, y) \models \Box_K \varphi \ &\iff \ \text{For all } y' \in W_K, y S_x y' \text{ implies } \MAK, (x, y') \models \varphi, \\
        \MAK, (x, y) \models @_a \varphi \ &\iff \ \MAK, (a^V, y) \models \varphi, \\
        \MAK, (x, y) \models @_k \varphi \ &\iff \ \MAK, (x, k^V) \models \varphi.
    \end{align*}
\end{dfn}

The truth of each propositional variable is determined by either $x \in W_A$ or $y \in W_K$. Especially whether $p_K$ is true or false is independent of the element of $W_A$, so $p_K$ can be assumed as an agent-free proposition.

The usage of the satisfaction operator $@$ should also be mentioned. It refers to a specific agent or epistemic alternative while ignoring the current one. For example, the meaning of $@_a \varphi$ is ``for an agent whose name is $a$, $\varphi$ holds.'' The current element of $W_A$ is no longer necessary information to determine the truth of that formula.

\begin{dfn}
    A formula $\varphi$ is \emph{valid} with respect to the class of $\MAK$ (written as $\modelsAK \varphi$) if $\MAK, (x, y) \models \varphi$ for every model $\MAK$ and its every pair $(x, y)$.
\end{dfn}

\subsection{Examples}

As we do in Facebook logic, we can compound friendship and knowledge in agent-knowledge logic. We read $\Box_K \varphi$ as ``I know $\varphi$,'' and $\Box_A \varphi$ as ``All of my friend are $\varphi$.'' For example, we can write some sentences as follows.
\begin{itemize}
    \item $\Box_A \Box_K p_K$: All of my friends know $p_K$.
    \item $\Dia_A \Box_K p_K$: Some of my friends know $p_K$.
    \item $\Box_K \Dia_A \Box_K$: I know that some of my friends know $p_K$.
\end{itemize}

Moreover, we can designate an individual by calling their name owing to nominals. Consider this sentence:
\begin{quotation}
    I am Andy's friend, and Andy knows he has that the Earth goes around the Sun Then, one of my friends knows the heliocentric theory.
\end{quotation}
This inference can be symbolized in the agent-knowledge logic as follows:
\[
    \Dia_A a \land @_a \Box_K p_K \rightarrow \Dia_A \Box_K p_K ,
\]
where $p_K$ shows ``the Earth goes around the Sun'' and $a$ shows ``This is Andy.''

The difference between agent-knowledge logic and Facebook logic becomes more pronounced when we assume that the binary relations over epistemic alternatives are equivalence relations. For example, in Facebook logic, the formula $@_n K p \rightarrow p$ is not valid even if $\asymp_w$ is an equivalence relation. Define $\Kmodel = (W, A, (\sim_a)_{a \in A}, (\asymp_w)_{w \in W}, V)$ as follows:
\begin{align*}
    W &= \{ w, v \}, \\
    A &= \{ a, b \}, \\
    \sim_a &= \sim_b = W \times W, \\
    \asymp_w &= \asymp_v = \emptyset, \\
    V(p) &= \{ (w, b), (v, b)\}, \\
    V(n) &= W \times \{ b \}.
\end{align*}
Then, $\Kmodel, (w, a) \models @_n K p$ holds but we have $\Kmodel, (w, a) \not \models p$. However, in agent-knowledge logic, the situation changes.
\begin{prop}
    The formula $@_a \Box_K p_K \rightarrow p_K$ is valid with respect to the class of $\MAK$ where all of $S_x$ are equivalence relations.
\end{prop}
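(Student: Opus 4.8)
The plan is to assume the antecedent $@_a \Box_K p_K$ holds at an arbitrary point $(x, y)$ of an arbitrary AK model $\MAK$ all of whose relations $S_x$ are equivalence relations, and to derive that $p_K$ holds at $(x, y)$. Unfolding the clause for $@_a$ turns this hypothesis into $\MAK, (a^V, y) \models \Box_K p_K$, so the content of the assumption really concerns the agent named by $a$.

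The first step is to exploit reflexivity. Since $S_{a^V}$ is assumed to be an equivalence relation, it is in particular reflexive, giving $y S_{a^V} y$. Instantiating the clause for $\Box_K$ at the point $(a^V, y)$ with the witness $y' = y$ then yields $\MAK, (a^V, y) \models p_K$, and by the clause for $p_K$ this means $y \in V(p_K)$.

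The second and decisive step is to transfer this conclusion back to the original agent $x$. Because $p_K \in \Prop_K$ is agent-free, its truth depends only on the epistemic coordinate: the satisfaction clause reads $\MAK, (x', y) \models p_K \iff y \in V(p_K)$ with no dependence on $x'$. Hence $y \in V(p_K)$ immediately gives $\MAK, (x, y) \models p_K$, which is exactly the consequent.

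I expect the only subtle point to be this last transfer, and it is precisely where agent-knowledge logic departs from Facebook logic. In Facebook logic the truth of a propositional variable depends on both coordinates, so one cannot pass freely between agents and the analogue of this formula fails (as the counterexample just before the proposition shows). Here the separation of $\Prop_A$ from $\Prop_K$ makes the move trivial, so the whole proof reduces to combining reflexivity of $S_{a^V}$ with the agent-independence of $p_K$.
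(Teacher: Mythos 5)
Your proof is correct and follows essentially the same route as the paper's: unfold $@_a$, apply reflexivity of the relevant $S$-relation to get $p_K$ at $(a^V, y)$, and transfer back to $(x, y)$ by the agent-independence of $p_K \in \Prop_K$. In fact you are slightly more careful than the paper, which writes ``the reflexivity of $S_y$'' where the relevant relation is $S_{a^V}$ (the relations $S_x$ are indexed by elements of $W_A$), exactly as you have it.
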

\begin{proof}
    Suppose that $\MAK, (x, y) \models @_a \Box_K p_K$. Then, we have $\MAK, (a^V, y) \models \Box_K p_K$. By the reflexivity of $S_y$, especially we have $\MAK, (a^V, y) \models p_K$. Since the truth value of $p_K$ is determined only by an element of $W_K$, we have $\MAK, (x, y) \models p_K$.
\end{proof}
This fact may be better understood if we interpret those formulas into natural language. Even though Andy knows he has pollen allergy, it does not mean so does I. However, if he knows that the Earth goes around the Sun, then it is true; the Earth really goes around the Sun.

In addition to the relationships between epistemic alternatives, we can also impose restrictions on the relationships between agents as needed. For example, in Facebook logic, the relationship between agents should be irreflexive and symmetric. Also, we have another way to capture relationships between agents, for example, to read $x R_y x'$ as ``in the situation $y$, the agent $x$ can see the post of $x'$'' in X\footnote{Most of the readers are familiar with the name once it had; twitter.}. Then, we can read $\Box_A \Box_K p_K$ as ``all the people know $p_K$, \emph{as far as I know}.''

\section{Embedding Epistemic Logic into Agent-Knowledge Logic}
\label{secembed}
One of the aims of our new logic is to make it an alternative to Facebook logic. In fact, any sentence we can express in basic epistemic logic can be rewritten in this agent-knowledge logic. In this section, we show that we can embed epistemic logic into agent-knowledge logic.

First of all, we identify the theorem we wish to prove. A proper translation $T$ exists, and the following theorem holds.

\begin{thm}
    For all $\varphi \in \langEL$,
    \[
        \modelsEL \varphi \ \iff \ \modelsAK T(\varphi).
    \]
    \label{thmembed}
\end{thm}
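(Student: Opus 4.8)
The plan is to define a compositional translation $T\colon\langEL\to\langAK$ and then to prove a single truth-preservation lemma from which both directions of the biconditional follow. First I would fix, once and for all, an injection $p\mapsto p_K$ from $\Prop$ into $\Prop_K$ and an injection $i\mapsto a_i$ from $\agent$ into $\Nom_A$, and define $T$ by recursion: set $T(p)=p_K$, let $T$ commute with $\neg$ and $\land$, and --- this is the crucial clause --- set $T(K_i\varphi)=@_{a_i}\Box_K T(\varphi)$. The guiding idea is that the epistemic alternatives $W$ of an EL model become the knowledge coordinate $W_K$, each agent $i$ becomes a named point of $W_A$, and the accessibility relation $R_i$ is realized as $S_{a_i^V}$. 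Because the relation governing $\Box_K$ at a pair $(x,y)$ is $S_x$ and so depends on the ambient agent $x$, one cannot simply send $K_i$ to $\Box_K$; the satisfaction operator $@_{a_i}$ is needed to jump first to the agent named by $a_i$ and only then apply $\Box_K$.

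The heart of the argument is the following truth-preservation lemma, which I would prove by induction on $\varphi$: under the model correspondences described below, for every $\varphi\in\langEL$, every $w\in W_K$, and every $x\in W_A$,
\[
    \MEL, w \models \varphi \quad\iff\quad \MAK, (x, w) \models T(\varphi).
\]
Stating the equivalence uniformly in $x$ records the fact that $T(\varphi)$ is \emph{agent-independent}: its truth depends only on the knowledge coordinate. The Boolean and atomic cases are immediate. The only case of substance is $K_i$: evaluating $@_{a_i}\Box_K T(\varphi)$ at $(x,w)$ first moves to $(a_i^V, w)$, after which $\Box_K$ quantifies over the $S_{a_i^V}$-successors of $w$; since $S_{a_i^V}=R_i$ and the induction hypothesis applies at the agent $a_i^V$, this matches the EL clause for $K_i$ exactly. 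Nested modalities compose without trouble precisely because every $K$-translation opens with a satisfaction operator that discards the current agent, so the induction hypothesis may always be invoked at the freshly selected point $a_i^V$.

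The two model correspondences are as follows. From an EL model $\MEL=(W,(R_i)_{i\in\agent},V)$ I would build $\MAK$ by taking $W_K=W$, $W_A=\agent$, $S_i=R_i$, $V_K(p_K)=V(p)$, and $V_A(a_i)=\{ i \}$; the relations $R_y$ on $W_A$ and all remaining valuations never occur in any $T(\varphi)$ and may be chosen arbitrarily. Conversely, from any AK model $\MAK$ I would extract $\MEL$ by taking $W=W_K$, $R_i=S_{a_i^V}$, and $V(p)=V_K(p_K)$. Both constructions satisfy the hypotheses of the lemma, so the theorem follows by contraposition in each direction: if $\MAK,(x,y)\not\models T(\varphi)$ then the extracted $\MEL$ gives $\MEL, y\not\models\varphi$, yielding $\modelsEL\varphi\Rightarrow\modelsAK T(\varphi)$; and if $\MEL, w\not\models\varphi$ then the built $\MAK$ gives $\MAK,(i,w)\not\models T(\varphi)$ for any agent $i$, yielding $\modelsAK T(\varphi)\Rightarrow\modelsEL\varphi$.

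The main obstacle is conceptual rather than computational: finding the correct translation, and in particular recognizing that $K_i$ must be sent to $@_{a_i}\Box_K$ so that the agent-dependent relation $S_x$ is pinned to the intended $R_i$ before the modality is applied. Once this is in place the induction is routine, the only bookkeeping being the choice of distinct nominals $a_i$ for distinct agents (so that distinct EL agents yield distinct points of $W_A$ carrying the correct relations), which requires $\Nom_A$ to be large enough, together with the harmless degenerate case $\agent=\emptyset$.
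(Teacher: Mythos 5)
Your proposal is correct and takes essentially the same route as the paper: the identical translation $T(K_i\varphi)=@_{T(i)}\Box_K T(\varphi)$, the identical model correspondences ($W_A=\agent$, $W_K=W$, $S_{a_i^V}=R_i$ in one direction; $W=W_K$, $R_i=S_{T(i)^V}$ in the other), and the same contrapositive derivation of both directions of the theorem. The only difference is cosmetic: you package the truth-preservation claim as a single lemma, stated uniformly in the agent coordinate and applied to both correspondences, whereas the paper proves it as two separate lemmas (Lemmas \ref{lemELtoAK} and \ref{lemAKtoEL}) with the same inductive content, including your key observation that the satisfaction operator discards the current agent so the induction hypothesis can be invoked at $T(i)^V$.
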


To prove it, let us define how to translate a formula of epistemic logic.

\begin{dfn}
    We define a \emph{translation} $T: \langEL \rightarrow \langAK$ as follows:
    \begin{align*}
        T: \Prop \ni p &\mapsto p_K \in \Prop_K \text{ is a bijection,} \\
        T: \agent \ni i &\mapsto a \in \Nom_A \text{ is a bijection,} \\
        T(\neg \varphi) &= \neg T(\varphi), \\
        T(\varphi \land \psi) &= T(\varphi) \land T(\psi), \\
        T(K_i \varphi) &= @_{T(i)} \Box_K T(\varphi).
    \end{align*}
\end{dfn}

\begin{ex}
    Here is one example of translation.
    \[
        T(K_i (p \land K_j \neg q)) = @_{a_i} \Box_K (p_K \land @_{a_j} \Box_K \neg q_K).
    \]
    We write $a_i$ to abbreviate $T(i) \ (i \in \agent)$.
\end{ex}

In fact, the idea of rewriting $K_i \varphi$ as $@_{T(i)} \Box_K T(\varphi)$ was presented in Sano's review in 2011 \cite{sano2011} for Japanese, which introduces Seligman et al. \cite{seligman2011}. Unfortunately, this translation does not work for Facebook logic, but it does work when the target logic is agent-knowledge logic.

\begin{dfn}
    Given an EL model $\MEL = (W, (R_i)_{i \in \agent}, V)$, the induced AK model $\MAK^\alpha$ is defined as follows:

    $\MAK^\alpha = (\agent, W, \emptyset, (R_i)_{i \in \agent}, V^\alpha)$, where
    \begin{itemize}
        \item For any $p_A \in \Prop_A$, $V^\alpha(p_A) = \emptyset$,
        \item For any $p_K \in \Prop_K$, $V^\alpha(p_K) = V(T^{-1}(p_K))$,
        \item For any $a \in \Nom_A$, $V^\alpha(a) = V(T^{-1}(a))$,
        \item Take one $y_0 \in W$, and for any $k \in \Nom_K$, $V^\alpha(k) = \{ y_0 \}$.
    \end{itemize}
\end{dfn}

Note that we do not care about the definitions of $(R_y)_{y \in W_K}, V^\alpha(p_A),$ and $V^\alpha(k)$. It is because the formula translated by $T$ requires only $\Prop_K, \Nom_A,$ Boolean operators, $\Box_K$, and $@_a$.

\begin{lem}
    For any $\varphi \in \langEL$ and for any $i \in \agent$, we have:
    \[
        \MEL, w \models \varphi \iff \MAK^\alpha, (i, w) \models T(\varphi).
    \]
    \label{lemELtoAK}
\end{lem}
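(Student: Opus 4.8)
The plan is to prove the biconditional by induction on the structure of $\varphi \in \langEL$. The crucial design choice is that the statement quantifies over \emph{every} $i \in \agent$: this universal clause is exactly the strengthening of the induction hypothesis that the modal case will consume, since the translated knowledge operator resets the agent coordinate to a possibly different agent. Before starting the induction I would record the two identifications built into $\MAK^\alpha$: under the identification $W_A = \agent$, the relation $S_x$ on $W_K = W$ is literally $R_x$, and for a nominal $a_j = T(j)$ the name points back to its agent, i.e. $a_j^V = j$. I would also note that the image of $T$ uses only $\Prop_K$, $\Nom_A$, the Boolean connectives, $\Box_K$, and $@_a$, so the parts of $\MAK^\alpha$ left unspecified (the $(R_y)$, $V^\alpha(p_A)$, $V^\alpha(k)$) never enter the argument. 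With these in hand, the non-modal cases are routine and essentially all the content lives in the $K_j$ step.

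For the base case $\varphi = p \in \Prop$, I would compute both sides directly: $\MEL, w \models p$ iff $w \in V(p)$, while $\MAK^\alpha, (i,w) \models T(p) = p_K$ iff $w \in V^\alpha(p_K) = V(T^{-1}(p_K)) = V(p)$, using the definition of $V^\alpha$ and the bijectivity of $T$. Both conditions depend only on $w$, so the equivalence holds for every $i$ simultaneously. The Boolean cases $\neg\varphi$ and $\varphi \land \psi$ then follow immediately from the inductive hypothesis applied at the \emph{same} pair $(i,w)$, using that $T$ commutes with $\neg$ and $\land$.

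The main case, and the only real obstacle, is $\varphi = K_j \psi$, where $j \in \agent$ need not equal the coordinate $i$. Here $T(K_j\psi) = @_{a_j}\Box_K T(\psi)$, so I would unwind the satisfaction clauses in turn: $\MAK^\alpha,(i,w)\models @_{a_j}\Box_K T(\psi)$ iff $\MAK^\alpha,(a_j^V, w) \models \Box_K T(\psi)$, which by $a_j^V = j$ is $\MAK^\alpha,(j,w)\models \Box_K T(\psi)$. Expanding $\Box_K$ gives: for all $y' \in W_K = W$ with $w\, S_j\, y'$, we have $\MAK^\alpha,(j,y')\models T(\psi)$; and since $S_j = R_j$ this is exactly the condition ``for all $v \in W$ with $w R_j v$, $\MAK^\alpha,(j,v)\models T(\psi)$.'' Now I apply the inductive hypothesis to $\psi$ \emph{at the agent $j$} (rather than the original $i$) and the world $v$, converting $\MAK^\alpha,(j,v)\models T(\psi)$ into $\MEL, v \models \psi$; the resulting statement is precisely the clause defining $\MEL, w \models K_j\psi$. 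The subtle point worth flagging is exactly this shift of the first coordinate from $i$ to $j$ forced by $@_{a_j}$: the equivalence must already be available at $j$, which is why the lemma is stated, and the induction carried, uniformly in the agent coordinate.
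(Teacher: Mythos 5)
Your proposal is correct and follows essentially the same route as the paper: structural induction with the hypothesis strengthened to hold uniformly over all agents $i \in \agent$, the identifications $W_A = \agent$, $S_j = R_j$, $a_j^V = j$, and the agent-coordinate shift from $i$ to $j$ forced by $@_{a_j}$ in the $K_j$ case --- exactly the point the paper's argument also turns on. Your only deviation is presentational: you run the modal case as a single chain of biconditionals, which automatically absorbs the paper's separate left-to-right/right-to-left directions and its case split on whether an $R_j$-successor of $w$ exists (the vacuous-$\Box_K$ subcase), so nothing is missing.
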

\begin{proof}
    By induction on the complexity of $\varphi$.
    \begin{description}
        \item[$(\varphi = p)$] For all $i \in \agent$,
            \begin{align*}
                \MEL, w \models p &\iff w \in V(p) \\
                &\iff w \in V^\alpha(T(p)) \\
                &\iff \MAK^\alpha, (i, w) \models T(p).
            \end{align*}
        \item[$(\varphi = \neg \psi, \psi \land \chi)$] Straightforward.
        \item[$(\varphi = K_j \psi)$] First, we prove the left-to-right direction.

            Suppose that $\MEL, w \models K_j \psi$. Then, for all $v$ such that $w R_j v$, we have $\MEL, v \models \psi$. We divide the proof into two cases depending on whether such a world $v \in W$ exists.
            \begin{enumerate}[(i) ]
                \item If there is some $v \in W$, take arbitrary one. Then, we have $\MEL, v \models \varphi$. By the induction hypothesis, especially $\MAK^\alpha, (j, v) \models T(\varphi)$. Since we took $v$ arbitrarily, it follows that $\MAK^\alpha, (j, w) \models \Box_K T(\varphi)$. By the definition of $V^\alpha$, we finally get that $\MAK^\alpha, (i, w) \models @_{T(j)}\Box_K T(\varphi)$ for all $i \in A$.
                \item If there is no $v \in W$ such that $w R_j v$, we straightforwardly get that $\MAK^\alpha, (j, w) \models \Box_K T(\varphi)$. In the same way as in the former case, we have $\MAK^\alpha, (i, w) \models @_{T(j)} \Box_K T(\varphi)$ for all $i \in \agent$.
            \end{enumerate}
            In both cases, we can reach the result that $\MAK^\alpha, (i, w) \models @_{T(j)} \Box_K T(\varphi)$ for all $i \in \agent$. Therefore, we have $\MAK^\alpha, (i, w) \models T(K_j \varphi)$.

            Next, we prove the other direction. Take one $i \in A$ and suppose that $\MAK^\alpha, (i, w) \models T(K_j \psi)$. It means that for all $v$ such that $w R_j v$, $\MAK^\alpha, (j, v) \models T(\psi)$ holds. Take one $v$ such that $w R_j v$ (if we cannot, then $\MEL, w \models K_j \psi$ is straightforward.) By the induction hypothesis, we have $\MEL, v \models \psi$. Since we took $v$ arbitrarily, it follows that $\MEL, w \models K_j \psi$.
    \end{description}
\end{proof}

\begin{dfn}
    Given an AK model $\MAK = (W_A, W_K, (R_y)_{y \in W_K}, (S_x)_{x \in W_A}, V)$, the induced EL model $\MEL^\beta$ is defined as follows:

    $\MEL^\beta = (W_K, (S_i^\beta)_{i \in \agent}, V^\beta$), where
    \begin{itemize}
        \item $\agent$ is the set used in Definition \ref{deflangEL},
        \item $y S^\beta_i z$ in $\MEL^\beta$ iff $y S_{T(i)^V} z$ in $\MAK$,
        \item $V^\beta(p) = V(T(p))$.
    \end{itemize}
\end{dfn}

Let us consider a function $\beta_A: W_A \rightarrow \agent$ such that $\beta(T(i)^V) = i$ for all $i \in \agent$. It expresses the correspondence between an agent in $W_A$ and an agent in $\agent$. The illustration of this condition in Figure \ref{figcyclic} may help your understanding.

\begin{figure}[t]
    \centering
        \begin{tikzpicture}
            \node (i) [label=left:$\agent \ni$] at (0,0){$i$};
            \node (a) [label=right:$\in \Nom_1$] at (2,0) {$a$};
            \node (x) [label=right:$\in W_A$] at (2, -2) {$x$};
            \node (ix) [label=left:{$\agent \ni$}] at (0, -2){$i$};
            \draw (i) [auto, |-stealth] -- node {$T$} (a);
            \draw (a) [auto, |-stealth] -- node {$(\cdot)^V$} (x);
            \draw (x) [auto, |-stealth] -- node {$\beta_A$} (ix);
            \draw (ix) [dashed, auto, |-stealth] -- node {id$_A$} (i);
        \end{tikzpicture}
    \caption{The condition $\beta_A$ satisfies (id$_A$ is the identity on $\agent$).}
    \label{figcyclic}
\end{figure}
\begin{lem}
    For any $\varphi \in \langEL$ and for any $x \in W_A$,
    \[
        \MAK, (x, y) \models T(\varphi) \iff \MEL^\beta, y \models \varphi.
    \]
    \label{lemAKtoEL}
\end{lem}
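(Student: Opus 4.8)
The plan is to argue by structural induction on $\varphi \in \langEL$, proving the biconditional simultaneously for \emph{all} $x \in W_A$; keeping $x$ universally quantified is exactly what makes the modal step go through. For the atomic case $\varphi = p$, I would unfold $\MAK, (x, y) \models T(p) = p_K$ to $y \in V(p_K) = V(T(p))$, which by the definition of $V^\beta$ is $y \in V^\beta(p)$, i.e.\ $\MEL^\beta, y \models p$. Note this equivalence does not mention $x$, as it must not, since $p_K$ is an agent-free proposition. The Boolean cases $\varphi = \neg \psi$ and $\varphi = \psi \land \chi$ follow immediately from the induction hypothesis together with the facts $T(\neg\psi) = \neg T(\psi)$ and $T(\psi \land \chi) = T(\psi) \land T(\chi)$.

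The heart of the argument is the case $\varphi = K_i \psi$, where $T(K_i \psi) = @_{T(i)} \Box_K T(\psi)$. Here I would chain the semantic clauses of the AK model. The $@$-clause replaces the agent coordinate, so $\MAK, (x, y) \models @_{T(i)} \Box_K T(\psi)$ holds iff $\MAK, (T(i)^V, y) \models \Box_K T(\psi)$; the $\Box_K$-clause then rewrites this as ``for all $y' \in W_K$ with $y\, S_{T(i)^V}\, y'$ we have $\MAK, (T(i)^V, y') \models T(\psi)$.'' Two facts close the case. First, by the definition of $S^\beta_i$ in $\MEL^\beta$, the condition $y\, S_{T(i)^V}\, y'$ holds iff $y\, S^\beta_i\, y'$. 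Second --- and this is the step that uses the uniformity over $x$ --- the induction hypothesis applied at the shifted coordinate $x = T(i)^V$ gives $\MAK, (T(i)^V, y') \models T(\psi)$ iff $\MEL^\beta, y' \models \psi$. Combining these, the condition becomes ``for all $y'$ with $y\, S^\beta_i\, y'$ we have $\MEL^\beta, y' \models \psi$,'' which is precisely $\MEL^\beta, y \models K_i \psi$.

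I expect no genuine obstacle beyond careful bookkeeping. The one point deserving attention is that the induction hypothesis must be stated with $x$ universally quantified: the operator $@_{T(i)}$ forces us to evaluate $\Box_K T(\psi)$ not at the original $x$ but at the world $T(i)^V$, so a version of the lemma fixing a single $x$ would fail to transmit through the modal case. It is also worth observing that the auxiliary function $\beta_A$ of Figure~\ref{figcyclic} plays no formal role in this argument: the definitions of $S^\beta_i$ and $V^\beta$ refer only to $T(i)^V$, which is well defined as the unique world named by the nominal $T(i) \in \Nom_A$, so the induction never needs $\beta_A$ to be single-valued.
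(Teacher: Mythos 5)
Your proposal is correct and takes essentially the same route as the paper's proof: induction on $\varphi$ with $x$ kept universally quantified, identical atomic and Boolean cases, and the $K_i$ case resolved by combining the definition of $S^\beta_i$ with the induction hypothesis applied at the shifted coordinate $T(i)^V$. The only difference is presentational --- you chain biconditionals where the paper argues the two directions separately and explicitly treats the vacuous-successor cases, which your chaining absorbs automatically --- and your side remark that $\beta_A$ plays no formal role in the argument is accurate.
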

\begin{proof}
    By induction on the complexity of $\varphi$.
    \begin{description}
        \item[$(\varphi = p)$] For all $x \in W^A$,
            \begin{align*}
                \MAK, (x, y) \models T(p) &\iff y \in V(T(p)) \\
                &\iff y \in V^\beta(p) \\
                &\iff \MEL^\beta, y \models p.
            \end{align*}
        \item[$(\varphi = \neg \psi, \psi \land \chi)$] Straightforward.
        \item[$(\varphi = K_j \psi)$] First, we prove the left-to-right direction.

        Suppose that $\MAK, (x, y) \models T(K_j \psi)$. That is, we assume that $\MAK, (x, y) \models @_{T(j)} \Box_K T(\psi)$. Then, for all $z$ such that $y S_{T(j)^V} z$, we have $\MAK, (T(j)^V, z) \models T(\psi)$.
        Bearing the definition of $S_i^\beta$, it suffices to pick up one $z \in W_K$ such that $y S^\beta_j z$ (if we cannot, it is straightforward that $\MEL^\beta, y \models K_j \psi$ holds.) By the assumption, we have $\MAK, (T(j)^V, z) \models T(\psi)$. By the induction hypothesis, $\MEL^\beta, z \models \psi$. Since we picked up $z$ arbitrarily, we have $\MEL^\beta, y \models K_j \psi$.

        Next, we prove the other direction. Suppose that $\MEL^\beta, y \models K_j \psi$. It means that for all $z \in W_K$ such that $y S_j^\beta z$, $\MEL^\beta, z \models \psi$ holds. Now, pick $z \in W_A$ such that $y S_{T(j)^V} z$ arbitrarily (if we cannot, we have $\MAK, (x, y) \models T(K_j \psi)$ for all $x \in W_A$.), and we have $y S_j^\beta z$. Then, we have $\MAK^\beta, z \models \psi$. By the induction hypothesis, $\MAK, (T(j)^V, z) \models T(\psi)$. Since we pick up $z$ arbitrarily, it follows that $\MAK, (x, y) \models @_{T(j)} \Box_K T(\psi)$ for any $x \in W_A$, which means $\MAK, (x, y) \models T(K_j \psi)$.
    \end{description}
\end{proof}

Now, we are ready to prove the main theorem, Theorem \ref{thmembed}. Here is the proof.

\begin{proof}
    We prove it by showing the contraposition. To prove the left-to-right direction, suppose that we have some $\varphi$ such that $\not \modelsAK T(\varphi)$. Then, there is a model $\MAK$ and its pair of points $(x, y)$ such that $\MAK, (x, y) \models \neg T(\varphi)$, which means that $\MAK, (x, y) \models T(\neg \varphi)$. Then, by Lemma \ref{lemAKtoEL}, we have $\MEL^\beta, y \models \neg \varphi$, which leads us to the conclusion that $\not \modelsEL \varphi$. The case of the other direction can be done by using Lemma \ref{lemELtoAK}.
\end{proof}

We usually treat binary relations of EL models as equivalence relations. Moreover, once we want to deal with beliefs by means of a modal operator, we impose yet another condition on accessibility relations. The following corollary shows how embedding can reflect these restrictions.

\begin{prop}
    We have the following properties:
    \begin{enumerate}[(i) ]
        \item For every $i \in \agent$, if $R_i$ in $\MEL$ is reflexive (or serial, symmetric, transitive, euclidian), then so is $R_i$ in $\MAK^\alpha$.
        \item For every $x \in W_A$, if $S_x$ in $\MAK$ is reflexive (or serial, symmetric, transitive, euclidian), then so is $S_i^\beta$ in $\MEL^\beta$.
    \end{enumerate}
\end{prop}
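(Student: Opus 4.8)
The plan is to observe that each of the listed conditions --- reflexivity, seriality, symmetry, transitivity, and euclideanness --- is a first-order property of a binary relation that depends only on the relation viewed as a set of pairs over a fixed domain. Consequently, whenever the relevant accessibility relation of the target model is literally the same set of pairs over the same underlying set as a relation of the source model, the property transfers verbatim. So the entire argument reduces to identifying, in each direction, which relation in the induced model equals which relation in the original, and checking that the carrier set is left unchanged.

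For part (i), I would unfold the definition $\MAK^\alpha = (\agent, W, \emptyset, (R_i)_{i \in \agent}, V^\alpha)$. The knowledge-side family $(S_x)_{x \in W_A}$ is set equal to $(R_i)_{i \in \agent}$, and since $W_A = \agent$, the relation denoted $R_i$ in $\MAK^\alpha$ is precisely the relation $R_i$ of $\MEL$, now regarded as a binary relation on $W_K = W$. As both the domain $W$ and the pair-set $R_i$ are unchanged, any of the five properties enjoyed by $R_i$ in $\MEL$ holds of it in $\MAK^\alpha$ for exactly the same reason. I would spell out a single case, say transitivity, to exhibit the pattern, and remark that the others are identical.

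For part (ii), I would unfold the defining clause $y \mathrel{S_i^\beta} z \iff y \mathrel{S_{T(i)^V}} z$, which says that $S_i^\beta$ and $S_{T(i)^V}$ coincide as binary relations on $W_K$ (the world set of $\MEL^\beta$ is $W_K$, so again there is no change of domain). Under the hypothesis that every $S_x$ has the property in question, the particular relation $S_{T(i)^V}$ has it, and hence so does $S_i^\beta$. The one point requiring a moment's care is seriality, where I would note that the witness quantifier ``$\exists z$'' ranges over the unchanged set $W_K$, so that a successor for $S_{T(i)^V}$ is at once a successor for $S_i^\beta$.

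I do not expect any genuine obstacle: the two constructions copy or select accessibility relations without altering their extension or their carrier, so the real content of the proposition is simply that first-order relational properties are invariant under equality of relations on a fixed set. The only thing to stay vigilant about is the notational identification --- matching each $i \in \agent$ to the world $T(i)^V \in W_A$ through the bijection $T$ and the naming map $(\cdot)^V$, exactly as displayed in Figure~\ref{figcyclic} --- and confirming that each construction genuinely leaves the carriers $W$ and $W_K$ fixed.
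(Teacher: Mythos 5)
Your proposal is correct and matches the paper's approach exactly: the paper's entire proof is the one-line observation that part (i) is obvious (the induced model reuses each $R_i$ verbatim as a relation on the unchanged set $W$) and part (ii) is immediate from the definition $y \mathrel{S_i^\beta} z \iff y \mathrel{S_{T(i)^V}} z$, which makes $S_i^\beta$ extensionally equal to $S_{T(i)^V}$ on $W_K$. You have simply spelled out, carefully and accurately, the same identification of relations and carriers that the paper leaves implicit.
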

\begin{proof}
    The former is obvious, and the latter is straightforward from the definition of $S_i^\beta$.
\end{proof}

\section{Proof System}
\label{sectableau}

In this section, we introduce a tableau calculus as a proof system.

In constructing a tableau calculus for agent-knowledge logic, we have made significant references to that for hybrid logic. The primary reference is the work of Bolander and Blackburn \cite{bolander2007}. We also refer to Nishimura \cite{nishimura2024}, which studies tableau calculi for some two-dimensional hybrid logics.

For simplicity, this section deals only with the negation normal form (NNF, in short) of formulas. For the satisfaction operators, a formula $\neg @_a \varphi$ is equivalent to $@_a \neg \varphi$. That is, for any model and its possible world $(x, y)$, a formula $\varphi$, and a nominal $a \in \Nom_A$, we have
\[
    \MAK, (x, y) \models @_a \neg \varphi \iff \MAK, (x, y) \models \neg @_a \varphi .
\]
The same equivalence holds for the case of $k \in \Nom_K$. Transformations to the NNF involving Boolean and modal operators can be done in the usual way.

\subsection{Tableau Calculus}

Here we provide a tableau calculus of agent-knowledge logic, denoted by $\tableauAK$.

\begin{dfn}
    A \emph{tableau} is a well-founded tree constructed in the following way:
    \begin{itemize}
      \item Start with a formula of the form $@_a @_k \varphi$ (called the \emph{root formula}), where $\varphi$ is a formula of agent-knowledge logic and $a \in \Nom_A, k \in \Nom_K$ does not occur in $\varphi$.
      \item For each branch, extend it by applying rules (see Definition \ref{defrules}) to all nodes as often as possible. However, we can no longer add any formula in a branch if at least one of the following conditions is satisfied:
      \begin{enumerate}[(i)]
        \item Every new formula generated by applying any rule already exists in the branch.
        \item The branch is closed (see Definition \ref{defclose}.)
      \end{enumerate}
    \end{itemize}
    Here, a \emph{branch} means a maximal path of a tableau. If a formula $\varphi$ occurs in a branch $\Theta$, we write $\varphi \in \Theta$.
    \label{deftableau}
  \end{dfn}

  \begin{dfn}
    A branch of a tableau $\Theta$ is \emph{closed} if one of the following condition holds.
    \begin{enumerate}[(i)]
      \item There are $a \in \Nom_A$, $k, l \in \Nom_K$, and $p_A \in \Prop_A$ such that $@_a @_k p_A, @_a @_l \neg p_A \in \Theta$.
      \item There are $a, b \in \Nom_A$, $k \in \Nom_K$, and $p_K \in \Prop_K$ such that $@_a @_k p_K, @_b @_k \neg p_K \in \Theta$.
      \item There are $a, b \in \Nom_A$ and $k, l \in \Nom_K$ such that $@_a @_k b, @_a @_l \neg b \in \Theta$.
      \item There are $a, b \in \Nom_A$ and $k, l \in \Nom_K$ such that $@_a @_k l, @_b @_k \neg l \in \Theta$.
    \end{enumerate}
    We say that $\Theta$ is \emph{open} if it is not closed. A tableau is called \emph{closed} if all branches in the tableau are closed.
    \label{defclose}
  \end{dfn}

\begin{dfn}
    We provide the rules of $\tableauAK$ in Figure \ref{figrules}.
    \begin{figure}[t]
    \begin{gather*}
        \infer[{[\reflexivity_A]}^{*1}]
            {@_a @_k a}
            { }
        \qquad
        \infer[{[\reflexivity_K]}^{*1}]
            {@_a @_k k}
            { }  \\
        \\
        \infer[{[\neg \neg]}]
            {@_a @_k \varphi}
            {@_a @_k \neg \neg \varphi}
        \qquad
        \infer[{[\land]}]
            {\deduce{@_a @_k \psi} {@_a @_k \varphi}}
            {@_a @_k (\varphi \land \psi)}
        \qquad
        \infer[{[\lor]}]
            {@_a @_k \varphi \mid @_a @_k \psi}
            {@_a @_k (\varphi \lor \psi)} \\
        \\
        \infer[{[\Dia_A]}^{*2, *3, *4}]
            {\deduce{@_b @_k \varphi}{@_a @_k \Dia_A b}}
            {@_a @_k \Dia_A \varphi}
        \qquad
        \infer[{[\Dia_K]}^{*2, *3, *5}]
            {\deduce{@_a @_l \varphi}{@_a @_k \Dia_K l}}
            {@_a @_k \Dia_K \varphi}
        \qquad
        \infer[{[\Box_A]}^{*6}]
            {@_b @_k \varphi}
            {\deduce{@_a @_k \Dia_A b}{@_a @_k \Box_A \varphi}}
        \qquad
        \infer[{[\Box_K]}^{*6}]
            {@_a @_l \varphi}
            {\deduce{@_a @_k \Dia_K l}{@_a @_k \Box_K \varphi}} \\
        \\
        \infer[{[@_A]}]
            {@_b @_k \varphi}
            {@_a @_k @_b \varphi}
        \qquad
        \infer[{[@_K]}]
            {@_a @_l \varphi}
            {@_a @_k @_l \varphi}
        \qquad
        \infer[{[\Id_A]}^{*3}]
            {@_b @_k \varphi}
            {\deduce{@_a @_k b} {@_a @_k \varphi}}
        \qquad
        \infer[{[\Id_K]}^{*3}]
            {@_a @_l \varphi}
            {\deduce{@_a @_k l} {@_a @_k \varphi}}
    \end{gather*}

    *1: $a \in \Nom_A$ and $k \in \Nom_K$ have already occurred in the branch.

    *2: This rule can be applied only one time per formula.

    *3: The formula above the line is not an accessibility formula. Here, an \emph{accessibility formula} is the formula of the form $@_a @_k \Dia_A b$ ($@_a @_k \Dia_K l$) generated by $[\Dia_A]$ ($[\Dia_K]$), where $b$ ($l$) is a new nominal.

    *4: $b \in \Nom_A$ does not occur in the branch.

    *5: $l \in \Nom_K$ does not occur in the branch.

    *6: The second formula above the line is an accessibility formula.

    \vspace{\baselineskip}
    In these rules, the formulas above the line show the formulas that have already occurred in the branch, and the formulas below the line show the formulas that will be added to the branch. The vertical line in the $[\lor]$ means that the branch splits to the left and right.
    \caption{The rules of $\tableauAK$}
    \label{figrules}
    \end{figure}
    \label{defrules}
\end{dfn}

\begin{dfn}[provability]
    Given a formula $\varphi$, we say that $\varphi$ is \emph{provable} in $\tableauAK$ if there is a closed tableau whose root formula is $@_a @_k \varphi'$, where $a \in \Nom_A$ and $k \in \Nom_K$ does not occur in $\varphi$, and $\varphi'$ is an NNF of $\neg \varphi$.
\end{dfn}

\subsection{Termination and Completeness}

A tableau calculus has \emph{the termination property} if, for any tableau constructed in the system, all branches have finite length. We firstly prove that the tableau calculus $\tableauAK$ introduced above has the termination property. Due to the limited space of the paper, we provide a brief outline of the proof.

\begin{dfn}
    Let $\Theta$ be a branch of a tableau, and let $a, b \in \Nom_A$ and $k, l \in \Nom_K$ be nominals occurring in $\Theta$. A pair $(b, l)$ of nominals is \emph{generated} by $(a, k)$ in $\Theta$ (written: $(a, k) \prec_\Theta (b, l)$) if one of the following conditions holds.
    \begin{enumerate}[(i)]
        \item $k = l$ and $b$ is introduced by applying $[\Dia_A]$ to $@_a @_k \Dia_A \varphi$.
        \item $a = b$ and $l$ is introduced by applying $[\Dia_K]$ to $@_a @_k \Dia_K \varphi$.
    \end{enumerate}
\end{dfn}

\begin{lem}
    Let $\Theta$ be a branch of a tableau. The length of $\Theta$ is infinite if and only if there is an infinite sequence
    \[
        (a_0, k_0) \prec_\Theta (a_1, k_1) \prec_\Theta \cdots .
    \]
    \label{leminf}
\end{lem}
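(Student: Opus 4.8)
The plan is to prove both directions by analysing how new nominals enter a branch, and to invoke König's lemma for the harder direction. The whole argument rests on a \emph{subformula property}: every formula occurring in a branch $\Theta$ has the shape $@_a @_k \psi$, where $\psi$ is either a subformula (in NNF) of the root formula or an accessibility formula $\Dia_A b$ or $\Dia_K l$, and where every nominal occurring \emph{inside} $\psi$ is one of the finitely many nominals of the root formula. This is checked by a routine induction over the rules of Figure~\ref{figrules}; the point is that the rules only ever push nominals into the two outer $@$-positions or introduce them inside freshly created accessibility formulas. I would establish this first, since both directions rely on it.

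For the easy direction ($\Leftarrow$): given an infinite chain $(a_0,k_0)\prec_\Theta(a_1,k_1)\prec_\Theta\cdots$, each step introduces, by the definition of $\prec_\Theta$, a \emph{fresh} nominal --- a new $a_{i+1}\in\Nom_A$ in case (i), or a new $k_{i+1}\in\Nom_K$ in case (ii). By the freshness side-conditions (*4) and (*5) these nominals are pairwise distinct, so infinitely many distinct nominals occur in $\Theta$; each is introduced together with a new formula $@_{a_{i+1}}@_{k_{i+1}}\psi$, and these are pairwise distinct, whence $\Theta$ has infinite length.

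For the main direction ($\Rightarrow$): I would first use the subformula property to bound, for each fixed pair $(a,k)$, the number of formulas $@_a@_k\psi$ that can occur. There are only finitely many subformulas of the root formula, and an accessibility formula $@_a@_k\Dia_A b$ is produced only by $[\Dia_A]$, which by (*2) fires at most once per premise; hence only finitely many formulas carry a given context $(a,k)$. Consequently an infinite $\Theta$ forces infinitely many distinct nominals to be introduced. Since new nominals arise \emph{only} from $[\Dia_A]$ and $[\Dia_K]$, and each such application is exactly one $\prec_\Theta$-edge, the relation $\prec_\Theta$ has infinitely many edges. I would then verify that $\prec_\Theta$ organises these pairs into a finitely branching forest: the out-degree of $(a,k)$ is finite (again by finiteness of subformulas together with (*2)), and each generated pair has a \emph{unique} parent, because the fresh nominal occupying its new coordinate is introduced by exactly one rule application, whose premise determines the generating pair. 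König's lemma then yields an infinite path, i.e.\ an infinite $\prec_\Theta$-chain.

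The hard part will be this last step: to apply König's lemma I must ensure the forest has only finitely many roots, i.e.\ that every pair which ever generates a new nominal is $\prec_\Theta$-reachable from one of the finitely many pairs built from the root formula's nominals. This is where the subformula property does the real work --- because generated nominals only ever occupy the outer $@$-positions, the rules $[@_A]$, $[@_K]$, $[\Id_A]$, $[\Id_K]$ can match an \emph{inner} nominal only when it is one of the finitely many original nominals, which confines the ``entry points'' into each agent-fibre to a finite set. Making this bookkeeping precise --- in particular, excluding the pathology of infinitely many independent finite generation trees --- is the delicate point that finite branching alone does not settle.
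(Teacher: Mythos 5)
Your architecture is the standard Bolander--Blackburn one, and since the paper states Lemma \ref{leminf} with no proof at all (it is part of the ``brief outline''), your attempt supplies strictly more detail than the source; your ($\Leftarrow$) direction is correct, and in ($\Rightarrow$) you correctly establish unique parents and finite out-degree for $\prec_\Theta$. But the gap you flag at the end is genuine, and the claim you propose to close it with is false as stated. You assert that $[@_A]$, $[@_K]$, $[\Id_A]$, $[\Id_K]$ can only ever match an \emph{inner} nominal drawn from the finitely many nominals of the root formula. This fails in several ways: the rules $[\reflexivity_A]$, $[\reflexivity_K]$ insert \emph{any} nominal already occurring in the branch into inner position; nothing in Figure \ref{figrules} forbids applying $[\Dia_A]$ to an accessibility formula $@_a @_k \Dia_A b$ itself, which produces an equality $@_c @_k b$ between two \emph{fresh} nominals; and, most damagingly, an $[\Id_K]$ step can deposit contentful formulas on a pair with no $\prec_\Theta$-parent. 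Concretely, let the root formula contain $\Dia_A \Dia_K (l \land \varphi)$ where $l$ is a root nominal. The branch acquires $@_b @_{k_0} \Dia_K (l \land \varphi)$ with $b$ fresh, then $@_b @_m (l \land \varphi)$ with $m$ fresh, hence $@_b @_m l$ and $@_b @_m \varphi$, and $[\Id_K]$ yields $@_b @_l \varphi$. The pair $(b, l)$ has no $\prec_\Theta$-parent: $b$ was introduced at knowledge-context $k_0 \neq l$ (so clause (i) fails), and $l$, being a root nominal, was never introduced by $[\Dia_K]$ (so clause (ii) fails). Yet $(b,l)$ hosts $\varphi$ and can spawn its own generation tree, and by nesting this pattern the agent-coordinate $b$ can range over ever-new fresh nominals. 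So ``finitely many roots'' does not follow from your subformula property, and K\"onig's lemma is blocked: finite out-degree plus unique parents do not exclude infinitely many pairwise-independent finite trees, which is exactly the pathology you name but do not eliminate.

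The repair requires tracking the copying steps alongside $\prec_\Theta$: say $P$ \emph{feeds} $Q$ when a formula at $Q$ is produced by $[\Id_A]$, $[\Id_K]$, $[@_A]$, $[@_K]$, or $[\Box_A]$, $[\Box_K]$ from premises at $P$. Inspecting the rules, every contentful formula other than the root traces back through feed- and $\prec_\Theta$-edges to the initial pair (the premise-free $[\reflexivity]$ formulas are inert trivial equalities), so the combined relation is essentially single-rooted and finitely branching. One then needs that the measure $m_\Theta$ of Lemma \ref{lemdec} is non-increasing along feed-edges and strictly decreasing along $\prec_\Theta$-edges, plus a separate argument bounding consecutive feed-steps at a fixed measure level (copying cannot re-add formulas already present, but bounding the number of pairs a fixed formula can migrate to is precisely why Bolander and Blackburn restrict their identity-style rule to urfathers). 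Only after this bookkeeping does an infinite branch force infinitely many $\prec_\Theta$-edges along a single path, giving the stated infinite chain. Your sketch defers exactly this step, so as it stands the proof establishes everything except the one statement carrying the load.
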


\begin{dfn}
    Let $\Theta$ be a branch of a tableau, and let $a \in \Nom_A$ and $k \in \Nom_K$ be nominals occurring in $\Theta$. We define a function $m_\Theta: \Nom_A \times \Nom_K \rightarrow \mathbb{N}$ as follows:
    \[
        m_\Theta((a, k)) = \max\{ |\varphi| \mid @_a @_k \varphi \in \Theta \} .
    \]
\end{dfn}

\begin{lem}
    Let $\Theta$ be a branch of a tableau. If $(a, k) \prec_\Theta (b, l)$, then $m_\Theta((a, k)) > m_\Theta((b, l))$.
    \label{lemdec}
\end{lem}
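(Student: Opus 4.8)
The plan is to prove the strict decrease directly, by tracking the effect of generation on $m_\Theta$. The two clauses of the definition of $\prec_\Theta$ are mirror images (interchanging $\Nom_A,\Box_A,[\Dia_A]$ with $\Nom_K,\Box_K,[\Dia_K]$ and swapping the two coordinates), so I would carry out clause (i) in detail and settle clause (ii) by the same argument. Assume then that $(a,k)\prec_\Theta(b,l)$ via clause (i): $l=k$, and $b\in\Nom_A$ was introduced fresh by applying $[\Dia_A]$ to a formula $@_a@_k\Dia_A\varphi\in\Theta$. As this formula is in $\Theta$, we get $m_\Theta((a,k))\ge|\Dia_A\varphi|=|\varphi|+1\ge 2$. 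Hence it suffices to prove the uniform bound that \emph{every} formula $@_b@_m\psi$ whose first nominal is $b$ satisfies $|\psi|<m_\Theta((a,k))$; specialising to $m=k$ yields $m_\Theta((b,k))<m_\Theta((a,k))$, as required.

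I would establish this bound by induction on the stage at which a formula with first nominal $b$ enters the branch, according to the rule responsible. The base cases are the witness $@_b@_k\varphi$ (with $|\varphi|<|\Dia_A\varphi|\le m_\Theta((a,k))$) and the reflexivity outputs $@_b@_k b,@_b@_k k$ (complexity $1<2\le m_\Theta((a,k))$). Every rule that produces a first-nominal-$b$ formula \emph{from} a first-nominal-$b$ premise keeps the body a subformula of, or the same size as, the premise's body, and so is immediate from the inductive hypothesis; this covers $[\neg\neg],[\land],[\lor]$, the elimination $[@_K]$, the box rule $[\Box_K]$, the identity rule $[\Id_K]$, and the $\Dia$-rules, whose first-nominal-$b$ outputs are each either a strictly smaller body or an accessibility formula $@_b@_k\Dia_A c$/$@_b@_m\Dia_K l$ of complexity $2$ (harmless, since the triggering diamond already forces $m_\Theta((a,k))>2$).

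The rules that can create a first-nominal-$b$ formula \emph{without} a first-nominal-$b$ premise are $[\Box_A]$, $[@_A]$, and $[\Id_A]$, and these are where the work lies. For $[\Box_A]$, producing $@_b@_m\psi$ requires an accessibility formula $@_{a'}@_m\Dia_A b$ together with $@_{a'}@_m\Box_A\psi$; since $b$ is fresh, its only accessibility formula is the one created at its birth, $@_a@_k\Dia_A b$, forcing $a'=a$ and $m=k$, whence $@_a@_k\Box_A\psi\in\Theta$ and $|\psi|<|\Box_A\psi|\le m_\Theta((a,k))$. For $[@_A]$, producing $@_b@_m\psi$ would need a premise $@_{a'}@_m@_b\psi$ with $@_b$ occurring in the body of a formula, which never happens. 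To make both points precise I would prove an auxiliary \emph{occurrence lemma}: a nominal introduced as a $\Dia$-witness occurs in $\Theta$ only as an outer prefix, as the diamond-argument of its own unique accessibility formula, or as an atomic formula produced by reflexivity or identity --- never inside a satisfaction operator or a box in the body of a formula. This follows by a routine induction over the rules, using the freshness conditions on the $\Dia$-rules and the restrictions forbidding the $\Dia$- and $\Id$-rules from acting on accessibility formulas.

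The genuine obstacle is $[\Id_A]$. Producing $@_b@_m\psi$ by $[\Id_A]$ uses premises $@_{a'}@_m b$ and $@_{a'}@_m\psi$, i.e.\ a derived identity $a'=b$; the relabelled body $\psi$ is then bounded a priori only by $m_\Theta((a',m))$, which could exceed $m_\Theta((a,k))$. The danger is real exactly when the diamond body $\varphi$ is itself an already-occurring nominal $a'$, so that the fresh witness $b$ is forced to coincide with $a'$ and would inherit all of the possibly large formulas sitting at $(a',k)$. What rules this out is the loop-checking discipline, clause (i) of Definition \ref{deftableau}: no fresh witness is generated for a diamond already satisfied by a named world, so $b$ is never created in a situation where it is immediately identified with an older nominal carrying larger formulas. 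Making this interaction between the occurrence lemma and the loop-check watertight is, I expect, the hardest part of the argument; once it is secured the uniform bound holds, and clause (ii) is closed by the identical argument with the two coordinates, $\Box_K$, and $[\Id_K]$ in the corresponding roles.
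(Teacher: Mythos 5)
Your overall skeleton---reduce the lemma to a uniform bound on every formula prefixed by the fresh witness, prove that bound by induction on the stage at which a formula enters the branch, and isolate $[\Box_A]$, $[@_A]$, $[\Id_A]$ as the only rules that can create a $b$-prefixed formula from non-$b$-prefixed premises---is the right one, and matches the Bolander--Blackburn-style argument the paper's system is modelled on (the paper itself states Lemma \ref{lemdec} without proof, citing space, so there is no in-paper proof to compare against). You also diagnose the crux correctly: the identity rules, triggered when the diamond body is a nominal. But your resolution of that crux is a genuine gap. Clause (i) of Definition \ref{deftableau} is only a saturation condition: a branch stops being extended when no rule application yields a formula not already on it. It does not forbid applying $[\Dia_A]$ to $@_a @_k \Dia_A c$ with $c$ an old nominal---that application always produces brand-new formulas (a fresh witness $b$, the accessibility formula $@_a @_k \Dia_A b$, and the witness $@_b @_k c$), so the ``loop-checking discipline'' you invoke never fires, and none of the side conditions $*1$--$*6$ blocks the scenario you fear.

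Concretely, take the root formula $@_{a'} @_{k} \bigl( @_c \chi \land \Dia_A \Dia_A c \bigr)$ with $\chi$ large and with $a', k$ not occurring in the body, as Definition \ref{deftableau} requires. Then $[@_A]$ puts $@_c @_k \chi$ on the branch; applying $[\Dia_A]$ twice yields a fresh $a$ with witness $@_a @_k \Dia_A c$, and then a fresh $b$ with accessibility formula $@_a @_k \Dia_A b$ and witness $@_b @_k c$, so that $(a,k) \prec_\Theta (b,k)$ while $m_\Theta((a,k)) = 2$ (only the two diamond bodies and reflexivity atoms ever sit at $(a,k)$, since $a$ occurs as a body only in an accessibility formula, on which $*3$ forbids $[\Id_A]$). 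Now $[\reflexivity_A]$ gives $@_b @_k b$; $[\Id_A]$ on $@_b @_k c$ and $@_b @_k b$ gives $@_c @_k b$ (neither premise is an accessibility formula, so $*3$ is satisfied); and $[\Id_A]$ on $@_c @_k b$ and $@_c @_k \chi$ gives $@_b @_k \chi$, whence $m_\Theta((b,k)) \ge |\chi| > m_\Theta((a,k))$. So with the rules exactly as stated in Figure \ref{figrules}, the strict inequality fails, and the uniform bound cannot be ``made watertight'' from clause (i): what is needed is an additional restriction of Bolander--Blackburn type (for instance, applying rules only to formulas whose prefix nominals are urfathers of the branch, or declining to generate a fresh witness for a diamond whose body is a nominal), together with a correspondingly adjusted generation relation or measure. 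Your occurrence lemma is fine as far as it goes---it correctly disposes of $[@_A]$ and pins $[\Box_A]$ to the unique accessibility formula of $b$---but note that its exception clause (``as an atomic formula produced by \dots{} identity'') is precisely the door through which this counterexample walks.
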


\begin{thm}
    The tableau calculus $\tableauAK$ has the termination property.
\end{thm}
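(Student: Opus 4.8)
The plan is to obtain the termination property as an immediate consequence of Lemmas \ref{leminf} and \ref{lemdec}, using the well-foundedness of $(\mathbb{N}, <)$. Since termination is defined as the condition that every branch of every tableau has finite length, it suffices to fix an arbitrary branch $\Theta$ and show that it cannot be infinite. I would proceed by contradiction.

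So, first I would assume that $\Theta$ has infinite length. By Lemma \ref{leminf}, this yields an infinite generation sequence
\[
    (a_0, k_0) \prec_\Theta (a_1, k_1) \prec_\Theta (a_2, k_2) \prec_\Theta \cdots .
\]
Then I would apply Lemma \ref{lemdec} to each consecutive pair of this sequence. Since $(a_n, k_n) \prec_\Theta (a_{n+1}, k_{n+1})$ implies $m_\Theta((a_n, k_n)) > m_\Theta((a_{n+1}, k_{n+1}))$ for every $n$, this produces an infinite strictly decreasing sequence
\[
    m_\Theta((a_0, k_0)) > m_\Theta((a_1, k_1)) > m_\Theta((a_2, k_2)) > \cdots
\]
in $\mathbb{N}$. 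This contradicts the well-ordering of the natural numbers, which admits no infinite strictly descending chain. Hence $\Theta$ must be finite, and since $\Theta$ was an arbitrary branch, $\tableauAK$ has the termination property.

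At the level of the theorem itself there is essentially no obstacle: the argument is a one-line splice of the two lemmas with the well-foundedness of $\mathbb{N}$, so all the real work is already carried by Lemmas \ref{leminf} and \ref{lemdec}. The only point I would flag as needing care — and it is one that is implicitly discharged inside the statement of Lemma \ref{lemdec} — is the well-definedness of $m_\Theta$, namely that $\max\{\,|\varphi| \mid @_a @_k \varphi \in \Theta\,\}$ is actually attained. This rests on the observation that every formula $\varphi$ occurring in a labelled formula $@_a @_k \varphi$ on the branch is drawn from a finite stock: the rules of $\tableauAK$ only decompose the root formula into its subformulas, and the extra accessibility formulas of the form $@_a @_k \Dia_A b$ or $@_a @_k \Dia_K l$ have bounded size. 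Thus the set of sizes $|\varphi|$ is bounded above and the maximum exists. Granting the two lemmas as stated, this well-definedness is part of their content, and the termination theorem follows at once.
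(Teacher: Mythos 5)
Your proof is correct and follows essentially the same route as the paper: an infinite branch yields, via Lemma \ref{leminf}, an infinite $\prec_\Theta$-chain, which Lemma \ref{lemdec} converts into an infinite strictly decreasing sequence of values of $m_\Theta$ in $\mathbb{N}$, a contradiction with well-foundedness. Your additional remark on the well-definedness of $m_\Theta$ is a sensible observation but does not change the argument, which matches the paper's proof.
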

\begin{proof}
    By \emph{reductio ad absurdum}. Suppose there is a branch $\Theta$ of a tableau that is infinite. Then, by Lemma \ref{leminf}, we have an infinite sequence
    \[
        (a_0, k_0) \prec_\Theta (a_1, k_1) \prec_\Theta \cdots .
    \]
    Applying Lemma \ref{lemdec}, we have an infinite decreasing sequence
    \[
        m_\Theta((a_0, k_0)) > m_\Theta((a_1, k_1)) > \cdots ,
    \]
    which contradict to the definition of $m_\Theta$.
\end{proof}

The soundness of $\tableauAK$ can be proved in a similar way introduced in \cite{nishimura2024}. Then, we move on to prove the completeness of $\tableauAK$. In preparation, we define some terms.

First, we use the term \emph{subformula} with an expanded meaning. Given two formulas $@_a @_k \varphi$ and $@_b @_l \psi$, the formula $@_a @_k \varphi$ is a \emph{subformula} of the other formula $@_b @_l \psi$ if $\varphi$ is a subformula (in the usual way) of $\psi$. Second, we say a branch $\Theta$ \emph{saturated} if every new formula generated by applying some rules already exists in $\Theta$.

\begin{dfn}
    Given a branch $\Theta$ of a tableau, we define $\sim^A_\Theta \subset \Nom_A \times \Nom_A$ and $\sim^K_\Theta \subset \Nom_K \times \Nom_K$ as follows.
    \begin{itemize}
        \item $a \sim^A_\Theta b$ if there is a nominal $k \in \Nom_K$ such that $@_a @_k b \in \Theta$.
        \item $k \sim^K_\Theta l$ if there is a nominal $a \in \Nom_A$ such that $@_a @_k l \in \Theta$.
    \end{itemize}
\end{dfn}

We can show that if $\Theta$ is saturated, then both $\sim^A_\Theta$ and $\sim^K_\Theta$ are equivalence relations. They enable us to take a representative of nominals.

\begin{dfn}
    Let $\Theta$ be a tableau branch and $a \in \Nom_A$ a nominal occurring in $\Theta$. The \emph{urfather} of $a$ on $\Theta$ (written: $u_\Theta(a)$) is the earliest introduced nominal $b$ such that $a \sim^A_\Theta b$. For $k \in \Nom_K$, $u_\Theta(k)$ is defined in the same way.
\end{dfn}

\begin{dfn}
    Given an open saturated branch $\Theta$, a model $\MAK^\Theta = (W_A^\Theta, W_K^\Theta, (R_y^\Theta)_{y \in W_K^\Theta}, (S_x^\Theta)_{x \in W_A^\Theta}, V^\Theta)$ generated from $\Theta$ is defined as follows:
    \begin{align*}
        W_A^\Theta &= \{ u_\Theta(a) \mid a \in \Nom_A \text{ occurs in } \Theta \}, \\
        W_K^\Theta &= \{ u_\Theta(k) \mid k \in \Nom_K \text{ occurs in } \Theta \}, \\
        R_{u_\Theta(k)}^\Theta &= \{ (u_\Theta(a), u_\Theta(b)) \mid \text{accessibility formula } @_a @_k \Dia_A b \in \Theta \}, \\
        S_{u_\Theta(a)}^\Theta &= \{ (u_\Theta(k), u_\Theta(l)) \mid \text{accessibility formula } @_a @_k \Dia_K l \in \Theta \}, \\
        V^\Theta(p_A) &= \{ u_\Theta(a) \mid \text {there is } k \in \Nom_K \text{ such that } @_a @_k p_A \in \Theta \} \text { where } p_A \in \Prop_A, \\
        V^\Theta(p_K) &= \{ u_\Theta(k) \mid \text {there is } a \in \Nom_A \text{ such that } @_a @_k p_K \in \Theta \} \text{ where } p_K \in \Prop_K, \\
        V^\Theta(a) &= \{ u_\Theta(a) \} \text{ where } a \in \Nom_A, \\
        V^\Theta(k) &= \{ u_\Theta(k) \} \text{ where } k \in \Nom_K.
    \end{align*}
\end{dfn}

\begin{lem}
    Let $\Theta$ be an open saturated branch and let $@_a @_k \varphi$ be a subformula of the root formula of $\Theta$. Then, we have:
    \[
        \text{if } @_a @_k \varphi \in \Theta \text{, then } \MAK^\Theta, (u_\Theta(a), u_\Theta(k)) \models \varphi .
    \]
    \label{lemmodelexist}
\end{lem}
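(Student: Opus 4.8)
The plan is to prove the statement by induction on the complexity of $\varphi$, using saturation to guarantee that whatever a rule would produce from a formula already in $\Theta$ is itself in $\Theta$, and using openness to block the closure conditions of Definition \ref{defclose}. Before beginning the induction I would isolate the one genuinely load-bearing fact, a \emph{bridge lemma}: for a saturated branch, if $@_a @_k \psi \in \Theta$ then $@_{u_\Theta(a)} @_{u_\Theta(k)} \psi \in \Theta$; that is, any labelled formula can be transported to its urfather representatives. This is exactly what lets the canonical model $\MAK^\Theta$, whose worlds are urfathers, read truth off the syntactic facts recorded in $\Theta$.

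The base, Boolean, satisfaction, and diamond cases are then routine. For a positive atom, $@_a @_k p_A \in \Theta$ gives $u_\Theta(a) \in V^\Theta(p_A)$ by construction (dually for $p_K$), and $@_a @_k b \in \Theta$ with $b \in \Nom_A$ gives $a \sim^A_\Theta b$, so $u_\Theta(a) = u_\Theta(b) = b^{V^\Theta}$ and $b$ is satisfied (dually for $l \in \Nom_K$). The negated atoms are where openness enters: from $@_a @_k \neg p_A \in \Theta$, were $u_\Theta(a) \in V^\Theta(p_A)$, the bridge lemma would force $@_{u_\Theta(a)} @_m p_A$ (for the witnessing index $m$) and $@_{u_\Theta(a)} @_{u_\Theta(k)} \neg p_A$ into $\Theta$, triggering closure condition (i); the cases $\neg p_K$, $\neg b$, $\neg l$ run identically against conditions (ii), (iii), (iv). For $\psi \land \chi$ both conjuncts sit in $\Theta$ by $[\land]$; for $\psi \lor \chi$ the rule $[\lor]$ forces at least one disjunct into the single branch $\Theta$; for $@_b \psi$ and $@_l \psi$ the rules $[@_A]$, $[@_K]$ strip the outer operator and I finish with the induction hypothesis and $b^{V^\Theta} = u_\Theta(b)$. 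The diamonds are clean because the diamond rules are precisely what create the accessibility relations: from $@_a @_k \Dia_A \psi \in \Theta$, saturation yields a witness $b$ with the accessibility formula $@_a @_k \Dia_A b$ and $@_b @_k \psi$ both in $\Theta$, so $(u_\Theta(a), u_\Theta(b)) \in R^\Theta_{u_\Theta(k)}$ directly and the hypothesis supplies truth of $\psi$ at $(u_\Theta(b), u_\Theta(k))$; $\Dia_K$ is symmetric via $S^\Theta$.

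The box cases are the crux, and the place where the bridge lemma is indispensable. For $\varphi = \Box_A \psi$ I must establish truth at every $R^\Theta_{u_\Theta(k)}$-successor of $u_\Theta(a)$. Such a successor arises from an accessibility formula $@_{a'} @_{k'} \Dia_A b \in \Theta$ with $u_\Theta(a') = u_\Theta(a)$, $u_\Theta(k') = u_\Theta(k)$, and successor $u_\Theta(b)$, and this accessibility formula generally sits at the representatives $(a', k')$ rather than at $(a, k)$. Using the bridge lemma I transport $@_a @_k \Box_A \psi$ to $@_{a'} @_{k'} \Box_A \psi$, apply $[\Box_A]$ against the accessibility formula to obtain $@_b @_{k'} \psi \in \Theta$ by saturation, and close with the induction hypothesis; $\Box_K$ is symmetric.

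The main obstacle is therefore the bridge lemma itself. It must be derived from the nominal-equality machinery --- the rules $[\Id_A]$, $[\Id_K]$, $[@_A]$, $[@_K]$ and the reflexivity rules $[\reflexivity_A]$, $[\reflexivity_K]$ --- together with saturation, and the delicate point is that $\sim^A_\Theta$ (resp.\ $\sim^K_\Theta$) is defined through the \emph{existence} of a single witnessing index, whereas $[\Id_A]$ transports a formula only at the specific index at which the nominal equality is recorded. Hence the argument must first show that an $A$-nominal equality, once present at one $K$-coordinate, propagates to all relevant $K$-coordinates (and dually), and then chain single transport steps along the transitive-symmetric closure to reach the earliest-introduced representative. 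Establishing this propagation, while respecting the exclusion of accessibility formulas imposed by side condition $*3$, is the technical heart of the completeness argument; everything else is bookkeeping over subformulas, whose status as subformulas of the root is preserved by every rule, which is what keeps the induction well-founded.
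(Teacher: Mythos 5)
The paper itself gives no proof of Lemma~\ref{lemmodelexist} (it is stated and then used directly in the completeness theorem), so your attempt can only be judged on its own terms --- and on those terms there is a genuine gap, located exactly where you place the ``technical heart.'' Your induction scheme (atoms via openness, $[\land]$/$[\lor]$/$[@]$/diamond cases via saturation, boxes via transport to the representatives carrying the accessibility formula) is the standard Bolander--Blackburn-style architecture and is the right skeleton. But the bridge lemma you make everything rest on --- if $@_a @_k \psi \in \Theta$ then $@_{u_\Theta(a)} @_{u_\Theta(k)} \psi \in \Theta$ --- is not merely left unproven; in this strong two-coordinate form it is \emph{false} for $\tableauAK$. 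The obstruction is the one you flag and then defer: no rule of Figure~\ref{figrules} moves an $A$-nominal equality across $K$-indices. Both $[\Id_A]$ and $[\Id_K]$ require their two premises to share the full prefix $@_a @_k$, the rules $[@_A]$ and $[@_K]$ fire only on explicit satisfaction operators, and $[\reflexivity_A]$, $[\reflexivity_K]$ yield only self-equalities; consequently $@_a @_l b$ never yields $@_a @_k b$ for $k \neq l$. Concretely, start a tableau with root $@_a @_k \Dia_K (b \land p_A)$. Saturation produces the accessibility formula $@_a @_k \Dia_K l$, then $@_a @_l b$ and $@_a @_l p_A$, together with reflexivity instances such as $@_b @_k b$, and the branch saturates open with $a \sim^A_\Theta b$, so $u_\Theta(b) = a$; yet $@_a @_k b \notin \Theta$. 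Your bridge lemma fails on the instance $@_b @_k b$, and with it the two places that lean on it: the negated-atom cases and, more seriously, the $\Box_A$/$\Box_K$ cases, where you transport $@_a @_k \Box_A \psi$ to the pair $(a', k')$ at which the relevant accessibility formula actually sits.

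The repair has to respect the index-sensitivity of the $\Id$ rules rather than postulate it away. Note that the closure conditions of Definition~\ref{defclose} are deliberately index-mixed to compensate for exactly this: condition (i) closes on $@_a @_k p_A$ and $@_a @_l \neg p_A$ with the \emph{same} $A$-nominal but \emph{different} $K$-indices, and (ii)--(iv) are mixed dually. So the atom cases never require bringing both formulas to the common pair $(u_\Theta(a), u_\Theta(k))$; it suffices to equalize one coordinate, which single applications of $[\Id_A]$ or $[\Id_K]$ \emph{at the recorded witnessing index} can achieve, the other coordinate being absorbed by the mixed closure condition. Similarly, the box cases must be argued by analysing how an edge of $R^\Theta_{u_\Theta(k)}$ arises from an accessibility formula and chaining coordinate-wise $\Id$-transports along equalities at the indices where they actually occur, not by a blanket two-coordinate transport. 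As written, your proposal is not just missing a proof of its key lemma --- the lemma it needs is stronger than what the rules can deliver, so the induction as structured does not go through without this reorganization.
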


This lemma is called \emph{model existence lemma}. Note that by combining it with the termination property of $\tableauAK$, we can show the finite model property of agent-knowledge logic as well as the completeness.

\begin{thm}
    The tableau calculus $\tableauAK$ is complete for the class of all AK models.
\end{thm}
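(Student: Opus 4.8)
The plan is to establish completeness by contraposition, reducing it to the model existence lemma (Lemma~\ref{lemmodelexist}) together with the termination property already proved. Concretely, I would show that any formula $\varphi$ that is not provable in $\tableauAK$ fails to be valid, by extracting a concrete countermodel from a fully expanded tableau whose root refutes $\varphi$.

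First I would fix a non-provable $\varphi$ and consider a tableau whose root formula is $@_a @_k \varphi'$, where $\varphi'$ is an NNF of $\neg\varphi$ and $a \in \Nom_A$, $k \in \Nom_K$ are fresh. I would expand this tableau exhaustively, applying the rules of Figure~\ref{figrules} until no genuinely new formula can be added; the termination property guarantees that this process halts, so every branch is finite and, by construction, saturated in the sense defined before Lemma~\ref{lemmodelexist}. Since $\varphi$ is not provable, no such tableau is closed, and hence the expanded tableau has at least one branch $\Theta$ that is not closed. This $\Theta$ is then an open saturated branch, exactly the hypothesis required to invoke the model existence lemma.

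Next I would apply Lemma~\ref{lemmodelexist} to $\Theta$. The root formula $@_a @_k \varphi'$ lies in $\Theta$ and is trivially a subformula of the root formula, so the lemma yields $\MAK^\Theta, (u_\Theta(a), u_\Theta(k)) \models \varphi'$. Because $\varphi'$ is an NNF of $\neg\varphi$, the two formulas are satisfied at exactly the same points of every AK model, whence $\MAK^\Theta, (u_\Theta(a), u_\Theta(k)) \models \neg\varphi$, that is, $\MAK^\Theta, (u_\Theta(a), u_\Theta(k)) \not\models \varphi$. This exhibits a model and a pair of points refuting $\varphi$, so $\not\modelsAK \varphi$. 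Taking the contrapositive gives that every valid formula is provable, which is precisely completeness for the class of all AK models.

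The step deserving the most care is the passage from non-provability to the existence of an open saturated branch: I must argue that an exhaustively expanded tableau is genuinely saturated (no applicable rule produces a formula not already present, accounting for the one-application restriction $*2$ on the $[\Dia_A]$ and $[\Dia_K]$ rules) and that the failure to be closed forces a single branch that is open, so that the closure conditions of Definition~\ref{defclose} are all avoided along it. This hinges on the termination property to guarantee that exhaustive expansion terminates. A secondary point is to confirm that $\MAK^\Theta$ is a bona fide AK model---in particular that $W_A^\Theta$ and $W_K^\Theta$ are non-empty, which is secured by the nominals $a,k$ of the root together with the reflexivity rules $[\reflexivity_A]$ and $[\reflexivity_K]$, and that $V^\Theta$ sends each nominal to a singleton---but this is already built into the construction preceding Lemma~\ref{lemmodelexist} and can therefore be invoked directly. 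The genuine mathematical content lives in that assumed lemma, so the completeness argument itself is short once it is in hand.
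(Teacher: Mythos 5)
Your proposal is correct and follows essentially the same route as the paper: completeness by contraposition, obtaining an open saturated branch from a non-provable $\varphi$ (with termination guaranteeing that exhaustive expansion halts in saturation), and then invoking the model existence lemma to extract a countermodel satisfying $\varphi'$. Your additional care about saturation under the $*2$ restriction, the equivalence of $\varphi'$ with $\neg\varphi$, and the well-formedness of $\MAK^\Theta$ merely makes explicit what the paper leaves implicit.
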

\begin{proof}
    We show the contraposition.

    Suppose that $\varphi$ is not provable in $\tableauAK$. Then, we can find an open and saturated branch $\Theta$ with the root formula $@_a @_k \varphi'$, where $a \in \Nom_A$ and $k \in \Nom_K$ does not occur in $\varphi$, and $\varphi'$ is
    an NNF of $\neg \varphi$. Then, by Lemma \ref{lemmodelexist}, we have $\MAK^\Theta, (u_\Theta(a), u_\Theta(k)) \models \varphi'$. It means that there is an AK model and its possible world which falsify $\varphi$.
\end{proof}

The termination property and completeness of the tableau calculus tell us about the decidability of logic. If $\varphi$ is provable, then it is provable in finite time. By contrast, if $\varphi$ is unprovable, we can make a finite counterexample model. From them, the following corollary holds.

\begin{cor}
    The agent-knwoledge logic is decidable.
\end{cor}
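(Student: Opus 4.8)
The plan is to combine the termination property with soundness and completeness to extract an effective decision procedure for validity. Given an arbitrary formula $\varphi$, I would form the root formula $@_a @_k \varphi'$, where $\varphi'$ is an NNF of $\neg \varphi$ and $a \in \Nom_A$, $k \in \Nom_K$ are fresh, and then construct the tableau by applying the rules of Figure \ref{figrules} as prescribed in Definition \ref{deftableau}.

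First I would argue that the resulting tableau is a \emph{finite} tree, not merely one all of whose branches are finite. Only the rule $[\lor]$ causes a branch to split, and it splits into exactly two children, while every other rule extends a branch linearly; hence the tableau is finitely branching. Since the termination theorem guarantees that every branch has finite length, K\"onig's lemma yields that the whole tree is finite. Consequently the construction halts after finitely many rule applications, and we obtain a completed tableau in finite time.

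Next I would observe that, once this finite tableau is in hand, deciding whether it is closed is a purely mechanical task: each branch contains only finitely many formulas, and the four closure conditions of Definition \ref{defclose} merely require searching for a matching pair among them, which is clearly decidable. Thus we can effectively determine whether the tableau is closed.

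Finally I would tie the two halves together. If the tableau is closed, then $\varphi$ is provable in $\tableauAK$, and by soundness $\modelsAK \varphi$. If instead some branch remains open, then, since the construction is finished, that branch is saturated, and the model existence lemma (Lemma \ref{lemmodelexist}) produces an AK model falsifying $\varphi$, so $\not\modelsAK \varphi$. Hence closure of the tableau is equivalent to the validity of $\varphi$, and the construct-and-check procedure decides $\modelsAK \varphi$. The step requiring the most care is the passage from ``all branches are finite'' to ``the tree is finite''; this is exactly where finite branching together with K\"onig's lemma is needed, and it is what makes the procedure genuinely terminating rather than merely free of infinite branches.
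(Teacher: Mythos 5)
Your proposal is correct and takes essentially the same route as the paper, whose own justification is a brief remark that termination plus soundness and completeness (via the model existence lemma) yield a decision procedure: provable formulas are provable in finite time, and unprovable ones produce a finite countermodel from an open saturated branch. You merely make explicit the details the paper leaves implicit, in particular the K\"onig's lemma step from ``every branch is finite'' and finite branching to ``the whole tableau is finite,'' and the observation that checking the closure conditions on a finite tableau is effective.
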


\section{Future Work and Perspective}
\label{secfuture}

\subsection{Seeking More Usage}
As one of the expected future research endeavours involving agent-knowledge logic I plan to examine a greater variety of representations. The use of the tools presented in this paper would be just the tip of the iceberg. For example, $\Nom_A$ is the set of agents, and $\Prop_K$ is an agent-independent proposition. However, it is difficult to say that sufficient utilization has been found for $\Prop_A$ and $\Nom_K$.

It is also fruitful to imitate various operators of epistemic logic. For example, given a group $G \subseteq \agent$ of agents, the \emph{everybody knows operator} $E_G$ is defined as follows:
\[
    \MEL, w \models E_G \varphi \ \iff \ \Kmodel, w \models K_i \varphi \text{ for all } i \in G .
\]
Intuitively, this formula says that everyone in the group $G$ knows $\varphi$. In agent-knowledge logic, $E_G \varphi$ can be expressed by the following formula:
\[
    \bigwedge_{i \in G} @_{T(i)} \Box_K T(\varphi).
\]
Also, we may mimic other operators used in epistemic logic, such as the operator for common knowledge $C_G$ and the operator for distributed knowledge $D_G$. Research in this direction may be able to reflect various results in epistemic logic in agent-knowledge logic as well.

Additionally, there is another direction to research about agent-knowledge logic, to introduce a universal operator used in hybrid logic, which may enable us to symbolize more expressions in natural language. The definition of the universal operators $\mathsf{A}_A$ and $\mathsf{E}_A$ are as follows:
\begin{align*}
    \MAK, (x, y) \models \mathsf{A}_A \varphi \ &\iff \ \MAK, (z, y) \models \varphi \text{ for all } z \in W_A, \\
    \MAK, (x, y) \models \mathsf{E}_A \varphi \ &\iff \ \text{there is some } z \in W_A \text{ such that } \MAK, (z, y) \models \varphi.
\end{align*}

Owing to these operators, we can write some expressions as follows:
\begin{itemize}
    \item $\mathsf{E}_A \Box_K p_K$: Someone knows $p_K$.
    \item $\Box_K \mathsf{A}_A \Box_K p_K$: I know that all the people know $p_K$.
    \item $\mathsf{E}_A \Box_A \Box_K p_K$: There is a person all of whose friends know $p_K$.
\end{itemize}

\subsection{Hilbert-Style Axiomatization}
In this paper, we have given the tableau calculus for agent-knowledge logic as a proof system. We can give another proof system, for example, Hilbert-style axiomatization.

Fortunately, there is already abundant previous research in the surrounding fields. In addition to the aforementioned Sano's work \cite{sano2010}, Balbiani and Fern\'{a}ndez Gonz\'{a}lez \cite{balbiani2020} has shown the Hilbert-style axiomatization of Facebook logic. For LHS, recent research by Chen and Li \cite{chen2024} gives the axiomatization.

\subsection{Complexity}
In this paper, we have shown the decidability of the agent-knowledge logic using tableau calculus. But what about its computational complexity? As already known, the satisfiability problem for epistemic logic is PSPACE-complete \cite{halpern1992}. If agent-knowledge logic is used as an alternative to epistemic logic, it must also be PSPACE-complete.

The analysis of computational complexity for a fusion in modal logic may provide a clue to solving this problem. An explanation for a fusion is in \cite[p. 111]{gabbay2003}:
\begin{quotation}
    Let $L_1$ and $L_2$ be two multimodal logics formulated in languages $\mathcal{L}_1$ and $\mathcal{L}_2$, both containing the language $\mathcal{L}$ of classical propositional logic, but having disjoint sets of modal operators. Denote by $\mathcal{L}_1 \otimes \mathcal{L}_2$ the union of $\mathcal{L}_1$ and $\mathcal{L}_2$. Then the \emph{fusion} $L_1 \otimes L_2$ of $L_1$ and $L_2$ is the smallest multimodal logic $L$ in the language $\mathcal{L}_1 \otimes \mathcal{L}_2$ containing $L_1 \cup L_2$.
\end{quotation}
From the results of Halpern and Moses \cite{halpern1992}, we can obtain that the satisfiability problem for $\mathbf{K} \otimes \mathbf{K}$ is PSPACE-complete. Since agent-knowledge logic is based on $\mathbf{K} \otimes \mathbf{K}$, we may be able to answer the question with reference to this proof.

\section*{Acknowledgements}

I would like to thank Prof. Ryo Kashima and Leonardo Pacheco for their invaluable advice in writing this paper. The research of the author was supported by JST SPRING, Grant Number JPMJSP2106.

\bibliographystyle{eptcs}
\bibliography{logic}

\end{document}